\documentclass[11pt]{article}
\usepackage{amsmath,amssymb,amsfonts,textcomp,stmaryrd,amsthm}
\usepackage{color}
\usepackage[usenames,dvipsnames,svgnames,table]{xcolor}
\usepackage{epsfig}


\oddsidemargin 0.5cm
\textwidth     16cm
\textheight    20cm
\setlength\parindent{0pt}

\newcommand{\<}  {\langle}
\renewcommand{\>}{\rangle}

\newcommand{\tH}{\tilde{H}} 
\newcommand{\R}{\ensuremath{\mathbb{R}}}
\newcommand{\grad}{\ensuremath{\mathrm{grad\,}}}
\renewcommand{\div}{\ensuremath{\mathrm{div\,}}}

\newcommand{\bsigma}{{\text{\boldmath $\sigma$}}}
\newcommand{\btau}{{\text{\boldmath $\tau$}}}

\newcommand{\bfeta}{{\text{\boldmath $\eta$}}}

\newcommand{\CT}{\mathcal{T}}

\providecommand{\BP}{{\boldsymbol{P}}}
\providecommand{\BL}{{\boldsymbol{L}}}
\providecommand{\Bn}{{\boldsymbol{n}}}

\newcommand{\normHdiv}{{\mathrm{div},\Omega}}
\newcommand{\normHdivh}{{\mathrm{div},\CT}}
\newcommand{\normHdivK}{{\mathrm{div},K}}
\newcommand{\normHdivKp}{{\mathrm{div},K'}}

\newcommand{\Hdiv}{\boldsymbol{H}(\mathrm{div},\Omega)}
\newcommand{\HdivK}{\boldsymbol{H}(\mathrm{div},K)}

\newcommand{\Hdivh}{\boldsymbol{H}(\mathrm{div},\CT)}
\newcommand{\CHdivK}{\boldsymbol{\mathcal{H}}(\mathrm{div},K)}
\newcommand{\CHdivh}{\boldsymbol{\mathcal{H}}(\mathrm{div},\CT)}
\newcommand{\normCHdivh}{{\mathrm{div},\CT,\Bn}}
\newcommand{\CV}{\mathcal{V}}
\newcommand{\CE}{\mathcal{E}}

\newtheorem{theorem}{Theorem}
\newtheorem{lemma}[theorem]{Lemma}
\newtheorem{cor}[theorem]{Corollary}

\theoremstyle{definition} 
\newtheorem{remark}[theorem]{Remark}

\title{Note on discontinuous trace approximation\\ in the practical DPG method
\thanks{Supported by CONICYT through FONDECYT projects 1110324, 3140614
        and Anillo ACT1118 (ANANUM), and by NSF through grant DMS 1216356.}}
\date{}
\author{
Norbert Heuer$^\dagger$
\and
Michael Karkulik\thanks{
Facultad de Matem\'aticas, Pontificia Universidad Cat\'olica de Chile,
Avenida Vicu\~na Mackenna 4860, Macul, Santiago, Chile,
email: {\tt \{nheuer,mkarkulik\}@mat.puc.cl}}
\and
Francisco-Javier Sayas \thanks{
Department of Mathematical Sciences, University of Delaware,
Newark DE 19716, USA, e-mail: {\tt fjsayas@math.udel.edu}}}

\begin{document}
\maketitle
\begin{abstract}
We analyze a non-conforming DPG method with discontinuous trace approximation
for the Poisson problem in two and three space dimensions. We show its
well-posedness and quasi-optimal convergence in the principal unknown.
Numerical experiments confirming the theory have been presented previously.
\end{abstract}

\section{Introduction}

Most variants of the DPG method with optimal test functions are based upon an
ultra-weak formulation of the problem under consideration. This means that
principal unknown functions (like displacements and stresses) are approximated
in weaker norms than usual so that discontinuous basis functions are conforming,
cf., e.g., \cite{DemkowiczG_11_CDP,DemkowiczG_11_ADM,DemkowiczGN_12_CDP,
DemkowiczH_13_RDM}.
The use of discontinuous approximations has obvious advantages when considering
non-uniform meshes and local mesh refinements. Standard DPG theory for
boundary value problems of second order requires, however,
that approximations of traces of primal unknowns be continuous. This is due to
the fact that, although primal unknowns are measured in $L^2$-spaces, their
traces are analyzed in trace spaces of $H^1$-functions, cf.~\cite{DemkowiczG_11_ADM}.

The numerical results presented by Demkowicz and Gopalakrishnan in
\cite{DemkowiczG_11_ADM} use conforming approximations, thus need continuous
basis functions to approximate the trace $\hat u$ of the principal unknown $u$.
However, in the preprint version \cite{DemkowiczG_10_ADM}, the authors report
on experiments where this trace unknown is approximated by discontinuous basis
functions, and no negative effects were observed.
In this note we prove that this variational crime has no effect
on the approximation of $u$ as long as the polynomial degrees stay bounded.
Our analysis can be extended to arbitrary polynomial degrees
except for the stability of the Raviart-Thomas interpolation operator.
Indeed, Lemma~\ref{la_bound} below, stating the boundedness of the bilinear form,
can be extended to arbitrary degrees. Then the trace component of the upper bound
has a logarithmical factor in $p$.

There is one catch when analyzing the DPG method (for the Poisson problem or
others of second order) with discontinuous trace approximation. The resulting
standard discrete formulation, with test functions given exactly via the so-called
trial-to-test operator mapping to the full test space, is not well posed.
This is due to the fact that the appearing duality for traces is not well defined
in the general setting. This is mentioned below and is due to the fact the local
normal traces of H(div)-functions are not dual to (local)
$H^{1/2}$-functions. Therefore, one is forced to consider discrete test functions
where dualities are well defined as $L^2$-bilinear forms. This amounts to an
approximation of the trial-to-test operator and has been introduced
by Gopalakrishnan and Qiu as the practical DPG method, see \cite{GopalakrishnanQ_APD}.
In practice, the trial-to-test operator is approximated anyway and therefore,
considering our non-conforming variant only for the practical DPG method is both
necessary for theoretical reasons and relevant.

An overview of this note is as follows. In the next section we recall the model
problem and its standard ultra-weak formulation, and present our non-conforming discrete
scheme. Furthermore, we state the main result (Theorem~\ref{thm}) which shows that,
for the Poisson problem, the DPG method with discontinuous trace approximation
is well posed and that the $L^2$-error in $u$ converges quasi-optimally with respect
to mesh refinement. Technical results and a proof of the theorem are given in
\S\ref{sec_proofs}.

Throughout the paper, $a\lesssim b$ means that $a\le cb$ with a generic constant $c>0$
that is independent of involved mesh parameters or functions. Similarly, the notation
$a\gtrsim b$ and $a\simeq b$ is used.

\section{Non-conforming DPG method and main result}

Let us first introduce the Poisson model problem. We consider a bounded simply
connect Lipschitz domain $\Omega\subset\R^n$ ($n=2,3$), which is assumed to
be polygonal in two dimensions and polyhedral otherwise. The boundary of $\Omega$
is denoted by $\partial\Omega$. For given $f\in L^2(\Omega)$ we consider
\begin{equation} \label{prob}
   -\Delta u  = f \ \mbox{in}\ \Omega,\quad u = 0 \ \mbox{on}\ \partial\Omega.
\end{equation}

\paragraph{Ultra-weak formulation.}
To present an ultra-weak formulation of \eqref{prob} we consider a (sequence of)
mesh(es) $\CT$ of shape-regular simplexes $\CT=\{K\}$ and allow for hanging nodes/edges.
We assume that there is a finite number of patterns where a face is a strict subset of
a neighbor's face. This can be achieved, e.g., by starting with a coarse mesh and
using standard mesh refinement strategies.
The diameter of $K\in\CT$ is $h_K$ and we assume that the ratio of diameters of
neighboring elements is bounded. The skeleton is denoted by
$\Gamma=\cup_{K\in\CT}\partial K$ and we define $\Gamma_0:=\Gamma\cap\Omega$.

In the following we will refer to the edges of elements $K\subset\R^2$ also as faces.
We consider a set of non-overlapping faces $G=\{g\}$ of the mesh with:\\
(i) $G$ covers $\Gamma$: $\Gamma=\cup_{g\in G}\bar g$,\quad
(ii) each $g\in G$ is an entire face of an element $K\in\CT$.\\
We assign to each $g\in G$ exactly one element $K\in\CT$ which has
$g$ as an entire face. This assignment will be denoted by $g\in G\mapsto K(g)\in\CT$.
We also define the subset $G_0:=\{g\in G;\; g\subset\Gamma_0\}$.

We use standard (scalar) $L^2(\Omega)$, (vector) $\BL^2(\Omega)$, (vector) $\Hdiv$, and
$H^1(\Omega)$ spaces with $L^2(\Omega)$, $\BL^2(\Omega)$, $\Hdiv$  and
$H^1(\Omega)$-norms denoted by $\|\cdot\|$, $\|\cdot\|_\normHdiv$ and $\|\cdot\|_{1,\Omega}$,
respectively. We we will also need the broken spaces
\begin{align*}
   H^1(\CT)  &:= \{v\in L^2(\Omega);\; v|_K\in H^1(K)\ \forall K\in\CT\},\\
   \Hdivh  &:= \{\btau\in\BL^2(\Omega);\; \btau|_K\in\HdivK\ \forall K\in\CT\}
\end{align*}
with respective product norms
$\|\cdot\|_{1,\CT} = (\sum_{K\in\CT} \|\cdot\|_{1,K}^2)^{1/2}$,
$\|\cdot\|_\normHdivh = (\sum_{K\in\CT} \|\cdot\|_\normHdivK^2)^{1/2}$.
Below, we also need the $L^2(K)$-norm denoted by $\|\cdot\|_K$.
Furthermore we consider the trace spaces
\begin{align*}
   \tH^{1/2}(\Gamma_0)  &:= H^1_0(\Omega)|_{\Gamma_0},\\
   H^{-1/2}(\Gamma)  &:=
   \{\eta;\; \exists \btau\in\Hdiv:\; (\btau\cdot\Bn)|_\Gamma=\eta\}
\end{align*}
with trace norms
\begin{align*}
   \|v\|_{1/2,\Gamma_0} &:=
   \inf_{w\in H^1_0(\Omega);\; w|_{\Gamma_0} = v} \|w\|_{1,\Omega},\\
   \|\eta\|_{-1/2,\Gamma} &:=
   \inf_{\btau\in\Hdiv,\; (\btau\cdot\Bn)|_\Gamma = \eta} \|\btau\|_{\normHdiv}.
\end{align*}
Here,
$\Bn$ is a unit vector field on $\Gamma$ being normal to the faces in a certain
direction and pointing outwards on $\partial\Omega$.

Introducing $\bsigma:=\grad u$, $\hat u:=u|_{\Gamma_0}$, and
$\hat\sigma:=\bsigma\cdot\Bn|_\Gamma$, an ultra-weak formulation of \eqref{prob} reads:
{\em Find $(u,\bsigma,\hat u,\hat\sigma)\in U^{\mathrm{conf}} :=
L^2(\Omega)\times\BL^2(\Omega)\times\tH^{1/2}(\Gamma_0)\times H^{-1/2}(\Gamma)$ such that}
\begin{equation} \label{weak}
\begin{split}
   (\bsigma, \btau)_\CT + (u, \div\btau)_\CT
   - \<[\btau\cdot\Bn], \hat u\>_{\Gamma_0}
   &= 0
   \ \qquad\qquad\forall \btau\in\Hdivh, \\
   (\bsigma, \grad v)_{\CT} - \<\hat\sigma, [v]\>_\Gamma &= (f, v)_\Omega
   \qquad\forall v\in H^1(\CT).
\end{split}
\end{equation}
Here, $(\cdot,\cdot)_\Omega$, $\<\cdot,\cdot\>_{\Gamma_0}$ and $\<\cdot,\cdot\>_\Gamma$
denote dualities which extend the $L^2$-products on $\Omega$, $\Gamma_0$, and $\Gamma$,
respectively. The notation $(\cdot,\cdot)_\CT$ also denotes dualities on $\Omega$ but
indicates that appearing differential operators are calculated piecewise with respect
to the mesh $\CT$. Furthermore, $[v]$ and $[\btau\cdot\Bn]$ denote the jumps
of $v$ and $\btau\cdot\Bn$, respectively, across $\Gamma_0$, defined to be consistent
with the normal vector field $\Bn$, and on $\partial\Omega$,
$[v]$ reduces to the trace of $v$.

The left-hand side of system \eqref{weak} is the DPG bilinear form,
\begin{equation} \label{bfg}
   b((u,\bsigma,\hat u,\hat\sigma),(v,\btau))
   :=
   (\bsigma, \btau+\grad v)_\CT
   +
   (u, \div\btau)_\CT
   -
   \<[\btau\cdot\Bn], \hat u\>_{\Gamma_0} - \<\hat\sigma, [v]\>_\Gamma,
\end{equation}
and the test space of the ultra-weak formulation is
\[
   V := H^1(\CT)\times \Hdivh
   \quad\text{with norm}\quad
   \|(v,\btau)\|_V^2 := \|v\|_{1,\CT}^2 + \|\btau\|_{\normHdivh}^2
\]
and corresponding inner product $\<\cdot,\cdot\>_V$.
In \cite{DemkowiczG_11_ADM}, Demkowicz and Gopalakrishnan show that \eqref{weak}
has a unique solution. Furthermore, they propose a conforming discretization and
show its quasi-optimal convergence in the norm
\begin{equation} \label{norm_Uconf}
   \|(u,\bsigma,\hat u,\hat\sigma)\|_{U^\mathrm{conf}}^2
   =
   \|u\|^2 + \|\bsigma\|^2 + \|\hat u\|_{1/2,\Gamma_0}^2 + \|\hat\sigma\|_{-1/2,\Gamma}^2,
   \quad (u,\bsigma,\hat u,\hat\sigma)\in U^\mathrm{conf}.
\end{equation}
Conformity requires continuity of basis function only for the $\hat u$-component.
The other components are measured in low-regular spaces which can be approximated by
piecewise polynomials without continuity restriction.
In this note, we present a non-conforming variant that uses discontinuous piecewise
polynomial basis function for the approximation of the trace $\hat u$, and prove
its quasi-optimal convergence for the component $u$. In this way, simple discontinuous
basis functions can be used throughout.

\paragraph{Discretization.}
Let $P_p(K)$ be the space of polynomials up to degree $p$ on $K\in\CT$,
and let $\BP_p(K)$ be the space of vector fields of polynomials with components in $P_p(K)$.
Furthermore, $P_p(\partial K)$ is the space of functions which are polynomials of degree up
to $p$ on each face of $K$. Correspondingly, we define piecewise polynomial spaces
$P_p(\CT)$, $\BP_p(\CT)$, $P_p(G)$, and $P_p(G_0)$ without any inter-element continuity.

The approximation space for the non-conforming DPG method is
\begin{align*}
   U_h &:= U_h^u\times U_h^\bsigma\times U_h^{\hat u}\times U_h^{\hat\sigma}
   \quad\text{with}\\
   U_h^u        &:= P_p(\CT),\quad     U_h^\bsigma := \BP_p(\CT),\quad
   U_h^{\hat u}  := P_{p+1}(G_0),\quad U_h^{\hat\sigma} := P_p(G).
\end{align*}
For integer $r\ge p+n$, we define the discrete test space
\begin{align*}
   V^r = P_r(\CT)\times\BP_{p+2}(\CT).
\end{align*}
Now, the practical DPG method with discontinuous trace approximation reads as follows.
{\em Find $\phi_h:=(u_h,\bsigma_h,\hat u_h,\hat\sigma_h)\in U_h$ such that}
\begin{equation} \label{DPG}
   b(\phi_h, (v,\btau)) = (f,v)_\Omega
   \quad\forall (v,\btau)\in T^r(U_h)
\end{equation}
with trial-to-test operator
\[
\begin{split}
   T^r:\; U: =   L^2(\Omega)\times\BL^2(\Omega)\times H^{1/2}(G_0)\times H^{-1/2}(\Gamma)
             \to V^r:\\
   \<T^r(\phi), (v,\btau)\>_V = b(\phi, (v,\btau))
   \quad\forall (v,\btau)\in V^r.
\end{split}
\]
Here, $H^{1/2}(G_0)$ is the product (or broken) space
\[
   H^{1/2}(G_0) = \Pi_{g\in G_0} H^{1/2}(g)
   \quad\text{with}\quad
   H^{1/2}(g) := H^1(\Omega)|_g\quad\forall g\in G_0.
\]
In order to facilitate deriving an appropriate a priori error estimate for our
DPG approximation we furnish $H^{1/2}(G_0)$ with a scalable norm $\|\cdot\|_{1/2,G_0}$.
To this end, we define for any $g\in G$ the (semi-)norms
\begin{equation} \label{3}
   |v|_{1/2,g}^2 :=\int_g\int_g \frac{|v(x)-v(y)|^2}{|x-y|^{n}} \,ds_x\,ds_y,
   \quad
   \|v\|_{1/2,g}^2 := h_{K(g)}^{-1}\|v\|_{L^2(g)}^2 + |v|_{H^{1/2}(g)}^2,
\end{equation}
and define
\[
   \|v\|_{1/2,G_0}^2 := \sum_{g\in G_0} \|v\|_{1/2,g}^2,
   \quad v\in H^{1/2}(G_0).
\]
Note that the bilinear form $b$ is not well defined on $U\times V$ because, in general,
$\btau\cdot\Bn|_g\not\in\tilde H^{-1/2}(g)=(H^{1/2}(g))'$ for $(v,\btau)\in V$
and $g\in G_0$ (one does have the weaker regularity
$\btau\cdot\Bn|_{\partial K}\in H^{-1/2}(\partial K)$ for $K\in\CT$).
Furthermore, since $U_h\not\subset U^\mathrm{conf}$, \eqref{DPG} is a
non-conforming discretization of \eqref{weak}.
We also note that the discrete scheme \eqref{DPG} is consistent since
the the discrete test space is conforming, $T^r(U_h)\subset V^r\subset V$.
  
Having a new norm for the $\hat u$-components at hand, we define a $U$-norm by
\[
   \|(u,\bsigma,\hat u,\hat\sigma)\|_U^2
   =
   \|u\|^2 + \|\bsigma\|^2 + \|\hat u\|_{1/2,G_0}^2
           + \|\hat\sigma\|_{-1/2,\Gamma}^2,
   \quad (u,\bsigma,\hat u,\hat\sigma)\in U.
\]
This norm defines the upper bound of our a priori error estimate for the DPG method
whereas the error itself considers only the approximation of $u$.

\paragraph{Main result.}
Our main result is the following error estimate. It is quasi-optimal for bounded $p$.

\begin{theorem} \label{thm}
Denote by $(u,\bsigma,\hat u,\hat\sigma)\in U^\mathrm{conf}$ the solution
of \eqref{weak}. If $r\ge p+n$ then the discrete scheme \eqref{DPG} has a
unique solution $(u_h,\bsigma_h,\hat u_h,\hat\sigma_h)\in U_h$ and there holds
the quasi-optimal error estimate
\[
   \|u-u_h\|
   \lesssim
   \inf_{(u_d,\bsigma_d,\hat u_d,\hat\sigma_d)\in U_h}
   \|(u-u_d,\bsigma-\bsigma_d,\hat u-\hat u_d,\hat\sigma-\hat\sigma_d)\|_U.
\]
\end{theorem}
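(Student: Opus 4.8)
The plan is to prove this error estimate by establishing the standard DPG stability machinery adapted to the non-conforming, practical setting, and then invoking a Céa-type argument for the principal unknown $u$. Let me think through what I would need.

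The core of any DPG analysis is a discrete inf-sup condition. The practical DPG method uses $T^r$ mapping into the finite-dimensional test space $V^r = P_r(\CT)\times\BP_{p+2}(\CT)$ rather than the full test space $V$. So I need to establish that
$$
\inf_{\phi_h \in U_h} \sup_{(v,\btau)\in V^r} \frac{b(\phi_h,(v,\btau))}{\|\phi_h\|_U \|(v,\btau)\|_V} \gtrsim 1.
$$
The engine for this, in the Gopalakrishnan–Qiu "practical DPG" framework, is the existence of a Fortin-type operator $\Pi: V \to V^r$ that is (a) uniformly bounded, $\|\Pi(v,\btau)\|_V \lesssim \|(v,\btau)\|_V$, and (b) satisfies the crucial orthogonality/consistency property that $b(\phi_h, (v,\btau) - \Pi(v,\btau)) = 0$ for all $\phi_h \in U_h$. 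Property (b) is what transfers the continuous inf-sup condition (which holds since the continuous problem is well-posed per Demkowicz–Gopalakrishnan) down to the discrete test space $V^r$.

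So the main steps, in order, would be the following. First, I would construct the Fortin operator $\Pi$ componentwise: a scalar component $\Pi_1 : H^1(\CT)\to P_r(\CT)$ and a vector component $\Pi_2 : \Hdivh \to \BP_{p+2}(\CT)$, built from element-wise Raviart–Thomas-type interpolation for the $\btau$-part and a suitable projection for the $v$-part. The degree requirement $r \ge p+n$ is precisely what guarantees enough degrees of freedom so that the orthogonality against the discrete trial functions in $U_h$ (which involve polynomials up to degree $p+1$ on traces) holds exactly — this is where the bookkeeping on polynomial degrees pays off. Second, using $\Pi$ together with the continuous inf-sup bound, I would derive the discrete inf-sup condition over $V^r$, which gives well-posedness and unique solvability of \eqref{DPG}. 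Third, since the scheme is consistent (the excerpt notes $T^r(U_h)\subset V$), I would invoke the boundedness of $b$ on $U\times V$ — which is exactly Lemma~\ref{la_bound} (boundedness of the bilinear form), the one result I am told I may assume — to run the standard quasi-optimality argument, yielding quasi-optimal convergence in the full $U$-norm. Finally, since the theorem only claims the estimate for $\|u - u_h\|$, I would simply bound this single component by the full $\|\cdot\|_U$-error, which is immediate.

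The main obstacle — and the genuinely novel technical point of this non-conforming variant — is the boundedness of the Fortin operator in the $\btau$-direction, i.e.~the $\Hdivh$-stability of the Raviart–Thomas interpolation. This is flagged in the introduction as the one ingredient that does not extend to arbitrary polynomial degree without a logarithmic factor. The difficulty is twofold: the $\btau$-part must reproduce normal traces against the discontinuous trace functions in $U_h^{\hat u} = P_{p+1}(G_0)$ while remaining uniformly bounded in the broken $\boldsymbol{H}(\mathrm{div})$-norm, and the duality $\langle[\btau\cdot\Bn],\hat u\rangle_{\Gamma_0}$ must be controlled by the scaled trace norm $\|\cdot\|_{1/2,G_0}$ rather than the usual global $H^{1/2}$ norm. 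I expect the scalable norm \eqref{3}, with its $h_{K(g)}^{-1}$ weighting, to be engineered precisely so that a scaling argument on each reference element gives a mesh-independent constant; matching the normal-trace moments up to degree $p$ against $\btau$ of degree $p+2$ is what forces the interpolation to land in $\BP_{p+2}(\CT)$ and to satisfy the exact orthogonality. Verifying these two compatibility conditions simultaneously is the crux of the argument.
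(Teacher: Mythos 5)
Your overall skeleton (a Fortin-type operator into $V^r$, then a Strang-type argument plus the boundedness of $b$, with Raviart--Thomas stability for bounded $p$ as the bottleneck) matches the paper's strategy in spirit, and the operators you name are exactly the ones used ($\Pi^\mathrm{grad}_r$ of Gopalakrishnan--Qiu and $\Pi^\mathrm{RT}_{p+2}$). But there is a genuine gap at the very first step: you propose to prove the \emph{full} discrete inf-sup condition, with $\|\phi_h\|_U$ in the denominator, by transferring ``the continuous inf-sup condition (which holds since the continuous problem is well-posed per Demkowicz--Gopalakrishnan).'' That well-posedness lives on $U^{\mathrm{conf}}$, whose trace component is $\tH^{1/2}(\Gamma_0)$ with the global trace norm; your trial space $U_h$ has discontinuous traces measured in the broken norm $\|\cdot\|_{1/2,G_0}$, so $U_h\not\subset U^{\mathrm{conf}}$ and there is no continuous inf-sup on $U$ in the full $\|\cdot\|_U$-norm available to transfer. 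Worse, the quantity you would transfer cannot even be written down: $b$ is not well defined on $U\times V$ (the paper stresses that $\btau\cdot\Bn|_g\notin\tH^{-1/2}(g)$ in general), which is also why your later appeal to ``boundedness of $b$ on $U\times V$'' is off --- Lemma~\ref{la_bound} holds only on $U\times V^r$. The paper circumvents both problems by introducing the auxiliary test space $\CV=H^1(\CT)\times\CHdivh$ (normal traces in $L^2$, scaled norm) and proving only a \emph{$u$-component} inf-sup there: Lemma~\ref{la_infsup_c} shows $\|u\|\lesssim\sup_{\psi\in\CV}b(\phi,\psi)/\|\psi\|_\CV$ via a duality argument (solve $-\Delta v=u$, set $\btau=-\grad v$, use $H^s$ regularity with $s\in(3/2,2)$ and elementwise scaling to control the face terms). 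This weaker inf-sup is then pushed down to $V^r$ by the Fortin step you describe (Lemma~\ref{la_infsup}). Since only the $u$-component is controlled uniformly, the theorem's conclusion is an estimate for $\|u-u_h\|$ alone; your final step of ``bounding this single component by the full $\|\cdot\|_U$-error'' presumes a full-norm quasi-optimality that the argument cannot deliver.

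A second, related omission: with only a $u$-component inf-sup, unique solvability of the square system \eqref{DPG} does not follow, since injectivity in the remaining components $(\bsigma_h,\hat u_h,\hat\sigma_h)$ is not implied. The paper handles this separately: Lemma~\ref{la1} shows that $\phi\mapsto\sup_{\psi\in\CV}b(\phi,\psi)/\|\psi\|_\CV$ is a norm on $U$ (with a mesh-dependent constant, which is harmless for invertibility), and Remark~\ref{rem} transfers this to the discrete test space, giving injectivity and hence unique solvability of \eqref{DPG}. Your proposal has no substitute for this step once the full inf-sup claim is removed.
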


A proof of this theorem is given in \S\ref{sec_pf}.

By standard approximation results (cf., e.g., the details given in
\cite{GopalakrishnanQ_APD}) we obtain the following convergence estimate.
It has the typical order of a DPG method for the chosen ultra-weak formulation.
Note that the order of convergence is reduced by the regularity of $\hat u$
which is bounded, via a trace argument, by the regularity of $u$.

\begin{cor}
For $r\ge p+n$, and assuming sufficient regularity of the solution
$\phi=(u,\bsigma,\hat u,\hat\sigma)$ of \eqref{weak}, there holds
\[
   \|u-u_h\|
   \lesssim
   h^s ( \|u\|_{H^{s+1}(\Omega)} + \|\bsigma\|_{H^{s+1}(\Omega)})
\]
for $s\in (1/2,p+1]$ and with $h=\max_{K\in\CT} h_K$.
\end{cor}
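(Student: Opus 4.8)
The plan is to derive the stated rate directly from the quasi-optimality of Theorem~\ref{thm}. Since $r\ge p+n$, that theorem gives
\[
  \|u-u_h\|\lesssim \inf_{\phi_d\in U_h}\|\phi-\phi_d\|_U ,
\]
so it suffices to exhibit a single $\phi_d=(u_d,\bsigma_d,\hat u_d,\hat\sigma_d)\in U_h$ whose four contributions in the $U$-norm, namely $\|u-u_d\|$, $\|\bsigma-\bsigma_d\|$, $\|\hat u-\hat u_d\|_{1/2,G_0}$ and $\|\hat\sigma-\hat\sigma_d\|_{-1/2,\Gamma}$, are each bounded by $h^s(\|u\|_{H^{s+1}(\Omega)}+\|\bsigma\|_{H^{s+1}(\Omega)})$. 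The structural observation driving the whole argument is that, because $U_h^{\hat u}=P_{p+1}(G_0)$ carries no inter-element continuity, all four pieces can be built from purely \emph{elementwise} polynomial approximations, so the best-approximation bound decouples into local estimates.

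For the volume unknowns I would take $u_d$ and $\bsigma_d$ to be the $L^2(\Omega)$-orthogonal projections of $u$ and $\bsigma$ onto $P_p(\CT)$ and $\BP_p(\CT)$. Elementwise Bramble–Hilbert estimates give $\|u-u_d\|\lesssim h^{\min(s+1,p+1)}\|u\|_{H^{s+1}(\Omega)}$ and likewise for $\bsigma$; since $s\le p+1$ forces $\min(s+1,p+1)\ge s$, both are $\lesssim h^s(\cdots)$ with room to spare, and these terms never limit the rate.

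The trace term $\hat u$ is where the exponent $h^s$ is actually fixed. I would set $\hat u_d|_g:=(\Pi u)|_{K(g)}\big|_g$ for each $g\in G_0$, where $\Pi u|_K\in P_{p+1}(K)$ is a local degree-$(p+1)$ approximation of $u$; this automatically lies in $P_{p+1}(G_0)$ precisely because no matching across faces is required. A scaled trace inequality of the form $\|w\|_{1/2,g}^2\lesssim h_{K(g)}^{-2}\|w\|_{K(g)}^2+|w|_{1,K(g)}^2$, applied to $w=(u-\Pi u)|_{K(g)}$ together with the local bounds $\|u-\Pi u\|_{K}\lesssim h_K^{s+1}|u|_{H^{s+1}(K)}$ and $|u-\Pi u|_{1,K}\lesssim h_K^{s}|u|_{H^{s+1}(K)}$ (valid since $s\le p+1$), yields $\|\hat u-\hat u_d\|_{1/2,g}\lesssim h_{K(g)}^{s}|u|_{H^{s+1}(K(g))}$. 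Summing over $g\in G_0$ and using shape-regularity (each element owns only $O(1)$ faces) gives $\|\hat u-\hat u_d\|_{1/2,G_0}\lesssim h^s\|u\|_{H^{s+1}(\Omega)}$. The loss of one power relative to the regularity $s+1$ is exactly the half derivative lost in taking the trace plus the half order at which the trace is measured, which is the mechanism behind the remark preceding the corollary.

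The remaining term $\hat\sigma$, measured in the negative-order norm $\|\cdot\|_{-1/2,\Gamma}$, is the main obstacle, since that norm is defined as an infimum over $\Hdiv$-extensions and cannot be reached by a purely local $L^2$ computation. I would circumvent this by choosing $\hat\sigma_d:=(\Pi^{\mathrm{RT}}\bsigma)\cdot\Bn|_\Gamma$, with $\Pi^{\mathrm{RT}}$ the Raviart–Thomas interpolant of order $p$; its normal trace lies in $P_p(G)$, and $\bsigma-\Pi^{\mathrm{RT}}\bsigma\in\Hdiv$ is an admissible extension of $\hat\sigma-\hat\sigma_d$, so that by definition $\|\hat\sigma-\hat\sigma_d\|_{-1/2,\Gamma}\le\|\bsigma-\Pi^{\mathrm{RT}}\bsigma\|_{\normHdiv}$. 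The $L^2$-part is controlled by $h^{\min(s+1,p+1)}\|\bsigma\|_{H^{s+1}(\Omega)}$, while the commuting diagram $\div\Pi^{\mathrm{RT}}\bsigma=\Pi_p\div\bsigma$ reduces $\|\div(\bsigma-\Pi^{\mathrm{RT}}\bsigma)\|$ to the $L^2$-projection error of $\div\bsigma\in H^{s}(\Omega)$, which is $\lesssim h^s\|\bsigma\|_{H^{s+1}(\Omega)}$; together these give the desired bound. This step is where the hypotheses bite: well-definedness and stability of $\Pi^{\mathrm{RT}}$ in the relevant fractional spaces forces the lower restriction $s>1/2$, and the bounded-degree assumption underlies the RT approximation constants, consistent with the caveat in the introduction. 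Collecting the four estimates and summing them in the $U$-norm completes the argument.
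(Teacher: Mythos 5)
The paper itself proves this corollary only by citation: it invokes Theorem~\ref{thm} and ``standard approximation results'' with a pointer to \cite{GopalakrishnanQ_APD}. Your proposal reconstructs exactly that intended route, and three of your four component estimates are correct and in the spirit of the paper: the $L^2$-projections for $u$ and $\bsigma$ comfortably beat the rate $h^s$; and your one-sided, purely local construction of $\hat u_d\in P_{p+1}(G_0)$ via the scaled trace inequality $\|w\|_{1/2,g}^2\lesssim h_{K(g)}^{-2}\|w\|_{K(g)}^2+|w|_{1,K(g)}^2$ (which does match the scaling of the norm \eqref{3}) correctly exploits the discontinuity of the trace space and correctly identifies $\hat u$ as the rate-limiting term, in agreement with the remark preceding the corollary.

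The gap is in your $\hat\sigma$-step, and it is specific to this paper's setting: the meshes are allowed to have hanging nodes/edges (this is the whole reason for the $G$, $K(g)$ machinery). On such meshes the elementwise Raviart--Thomas interpolant $\Pi^{\mathrm{RT}}\bsigma$ is in general \emph{not} in $\Hdiv$: across a hanging face, the normal trace seen from the large element is a single polynomial on the entire face fixed by moments against $P_p$ of that face, while from the small elements it is an independent piecewise polynomial on the sub-faces, and the two need not coincide. Hence $\bsigma-\Pi^{\mathrm{RT}}\bsigma\notin\Hdiv$ in general, it is not an admissible extension of $\hat\sigma-\hat\sigma_d$, your key inequality $\|\hat\sigma-\hat\sigma_d\|_{-1/2,\Gamma}\le\|\bsigma-\Pi^{\mathrm{RT}}\bsigma\|_{\normHdiv}$ fails as stated, and even the definition $\hat\sigma_d:=(\Pi^{\mathrm{RT}}\bsigma)\cdot\Bn|_\Gamma$ is ambiguous on hanging faces since the two-sided traces differ. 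The repair is standard but must be said: set $\hat\sigma_d|_g:=(\Pi^{\mathrm{RT}}_{K(g)}\bsigma)\cdot\Bn|_g\in P_p(g)$ for each $g\in G$, so that $\eta:=\hat\sigma-\hat\sigma_d$ is a single-valued $L^2(\Gamma)$-function; then lift $\eta$ elementwise by minimal-norm liftings $\btau_K\in\HdivK$ with $\btau_K\cdot\Bn|_{\partial K}=\eta|_{\partial K}$ and $\|\btau_K\|_{\normHdivK}\lesssim h_K^{1/2}\|\eta\|_{\partial K}$ (the scaled $L^2\hookrightarrow H^{-1/2}(\partial K)$ bound); single-valuedness of $\eta$ glues these to a global $\Hdiv$-extension, and the scaled trace estimate $h_K^{1/2}\|w\|_{\partial K}\lesssim\|w\|_K+h_K|w|_{1,K}$ applied to $w=\bsigma-\Pi^{\mathrm{RT}}_{K}\bsigma$, together with the bounded ratio of neighboring diameters, gives $\|\hat\sigma-\hat\sigma_d\|_{-1/2,\Gamma}\lesssim h^{s}\|\bsigma\|_{H^{s+1}(\Omega)}$ (in fact with rate $h^{s+1}$). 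Two minor misattributions, harmless but worth correcting: the restriction $s>1/2$ comes from making $\hat\sigma=\bsigma\cdot\Bn\in L^2(\Gamma)$ and the facewise $H^{1/2}$-machinery meaningful, not from stability of $\Pi^{\mathrm{RT}}$ (with $\bsigma\in H^{s+1}$, $s+1>3/2$, the interpolant is comfortably well defined); and the paper's bounded-degree caveat concerns the stability of $\Pi^{\mathrm{RT}}_{p+2}$ in the inf-sup argument of Lemma~\ref{la_infsup}, not the approximation constants in this corollary.
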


\section{Proofs} \label{sec_proofs}

We split the proof of Theorem~\ref{thm} into several parts.
First we study different types of inf-sup properties. Afterwards we analyze the
discrete boundedness of the bilinear form $b$. The final proof is given in \S\ref{sec_pf}.

\subsection{Inf-sup condition} \label{sec_infsup}

For technical reasons we introduce yet another space $\CHdivh$ as
\begin{align*}
   \CHdivh  &:= \{\btau\in\BL^2(\Omega);\; \btau|_K\in\CHdivK\ \forall K\in\CT\}\quad\text{with}\\
   \CHdivK  &:= \{\btau\in\HdivK;\; (\btau\cdot\Bn)|_{\partial K}\in L^2(\partial K)\}
                \quad\forall K\in\CT
\end{align*}
and scaled product norm
\[
   \|\btau\|_\normCHdivh^2 =
   \|\btau\|_\normHdivh^2 + \sum_{K\in\CT} h_K\|\btau\,\cdot\Bn\|_{\partial K}^2
   \quad(\btau\in\CHdivh),
\]
and consider the following subspace of $V$,
\[
   \CV := H^1(\CT) \times \CHdivh
\]
with norm $\|(v,\btau)\|_\CV^2=\|v\|_{1,\CT}^2+\|\btau\|_\normCHdivh^2$.
We will also make use of Sobolev spaces $H^s(\Omega)$ with $s\in (1/2,1)\cup (3/2,2)$, furnished
with the Sobolev-Slobodeckij norm
\[
   \|v\|_{s,\Omega}^2
   =
   \left\{\begin{array}{ll}
      \displaystyle
      \|v\|^2
      +
      \int_\Omega\int_\Omega \frac {|v(x)-v(y)|^2}{|x-y|^{n+2s}}\,dx\,dy
      &(s\in(1/2,1)),\\
      \displaystyle
      \|v\|_{1,\Omega}^2
      +
      \int_\Omega\int_\Omega \frac {|\grad v(x)-\grad v(y)|^2}{|x-y|^{n+2(s-1)}}\,dx\,dy
      &(s\in(3/2,2)).
   \end{array}\right.
\]
The notation of this norm will also be used for vector functions.
Note that the definition of this norm implies that there holds
(using the notation $\|\cdot\|_{s,\Omega}$ also for elements $K\in\CT$)
\begin{equation} \label{dd}
   \|v\|_{s,\CT} := \Bigl(\sum_{K\in\CT} \|v\|_{s,K}^2\Bigr)^{1/2}
   \le
   \|v\|_{s,\Omega}
   \quad\forall v\in H^s(\Omega),\ s\in (1/2,1)\cup(3/2,2).
\end{equation}

Essential technical detail of the proofs will be the injectivity of the bilinear form
$b$, when reducing the test space $V$ to the subspace $\CV$. This is the following lemma.

\begin{lemma} \label{la1}
The expression
\[
   \|\phi\|_{\CE}
   :=
   \sup_{\psi\in \CV\setminus\{0\}}
   \frac{b(\phi,\psi)}{\|\psi\|_\CV}
\]
is a norm in $U$.
\end{lemma}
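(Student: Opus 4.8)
The plan is to verify the three defining properties of a norm on $U$: finiteness (so that $\|\cdot\|_\CE$ takes values in $[0,\infty)$), absolute homogeneity together with the triangle inequality, and positive definiteness. Homogeneity and the triangle inequality are immediate from the linearity of $b(\cdot,\psi)$ in its first argument and the subadditivity of the supremum, so the two substantive points are finiteness and definiteness. For finiteness I would first observe that, in contrast to the situation on $U\times V$ flagged after \eqref{3}, the form $b$ \emph{is} well defined on $U\times\CV$: for $\btau\in\CHdivh$ the normal trace $\btau\cdot\Bn|_{\partial K}$ lies in $L^2(\partial K)$, so its restriction to any face $g$ lies in $L^2(g)$ and the critical term $\<[\btau\cdot\Bn],\hat u\>_{\Gamma_0}$ becomes a genuine $L^2(\Gamma_0)$-pairing against $\hat u\in H^{1/2}(G_0)\subset L^2(\Gamma_0)$. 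Boundedness then follows termwise: the volume terms are controlled by $(\|\bsigma\|+\|u\|)\|\psi\|_\CV$, the $\hat\sigma$-term by the standard $H^{-1/2}(\Gamma)$/$H^{1/2}$ duality, and, crucially, the face term by the scaling built into $\|\cdot\|_\normCHdivh$, since $h_{K(g)}^{1/2}\|[\btau\cdot\Bn]\|_{L^2(g)}\lesssim\|\btau\|_\normCHdivh$ pairs with $h_{K(g)}^{-1/2}\|\hat u\|_{L^2(g)}\le\|\hat u\|_{1/2,g}$ via Cauchy--Schwarz over the faces. Hence $\|\phi\|_\CE\lesssim\|\phi\|_U<\infty$.

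The heart of the lemma is definiteness: I must show that $b(\phi,\psi)=0$ for all $\psi\in\CV$ forces $\phi=0$, and I would proceed by successively localizing the test functions. Testing with $v=0$ and $\btau$ compactly supported inside a single element $K$ kills both trace terms and yields $(\bsigma,\btau)_K+(u,\div\btau)_K=0$, i.e. $\bsigma=\grad u$ weakly on $K$; in particular $u|_K\in H^1(K)$, so $u\in H^1(\CT)$. Testing with $\btau=0$ and $v$ compactly supported in $K$ gives $(\bsigma,\grad v)_K=0$, i.e. $\div\bsigma=0$ on each $K$.

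Next I would recover the trace identities by integration by parts. With $v=0$ and general $\btau\in\CHdivh$, using $\bsigma=\grad u$ to cancel the volume contributions, the relation collapses to $\sum_K\<u,\btau\cdot\Bn\>_{\partial K}=\<[\btau\cdot\Bn],\hat u\>_{\Gamma_0}$ for all $\btau\in\CHdivh$. The decisive structural fact is that the element-local normal-trace map onto $L^2(\partial K)$ is surjective, so one may prescribe $\btau\cdot\Bn$ arbitrarily on a single face $g$, let it vanish on the remaining faces, and do so one side of $g$ at a time. Localizing in this way identifies, on every interior face, the trace of $u$ from either adjacent element with $\hat u|_g$, and on every boundary face it forces $u|_{\partial\Omega}=0$. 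Thus $u$ is continuous across $\Gamma_0$ with single-valued trace $\hat u$ and vanishes on $\partial\Omega$, whence $u\in H^1_0(\Omega)$ globally. Running the symmetric argument with $\btau=0$ and general $v\in H^1(\CT)$, and using $\div\bsigma=0$, gives $\sum_K\<\bsigma\cdot\Bn,v\>_{\partial K}=\<\hat\sigma,[v]\>_\Gamma$; localizing face by face and side by side shows that $\bsigma\cdot\Bn$ matches across interior faces, so $\bsigma\in\Hdiv$, and equals $\hat\sigma$ on all of $\Gamma$.

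With these identities the conclusion is an elliptic uniqueness argument: $u\in H^1_0(\Omega)$, $\bsigma=\grad u\in\Hdiv$ and $\div\bsigma=\Delta u=0$ give $\|\grad u\|^2=0$, so $u=0$, and then $\bsigma=\grad u=0$, $\hat u=u|_{\Gamma_0}=0$, $\hat\sigma=\bsigma\cdot\Bn=0$, i.e. $\phi=0$. I expect the main obstacle to be precisely the face-localization step: making rigorous that one can isolate a single face and a single side requires care with the duality spaces (the $H^{-1/2}(g)$/$H^{1/2}_{00}(g)$ pairing in the $\hat\sigma$-identification versus the plain $L^2(g)$ pairing for $\hat u$) and, since the mesh admits hanging nodes, with faces $g\in G$ that are only strict subsets of a neighbor's face; the assumed finiteness of the set of hanging-node patterns is what keeps this bookkeeping manageable.
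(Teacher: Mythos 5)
Your proof is correct and follows essentially the same route as the paper's: elementwise localization of test functions to recover $\grad u=\bsigma$ and $\div\bsigma=0$ on each $K$, integration by parts together with the $L^2(\partial K)$-surjectivity of the normal trace on $\CHdivK$ to identify $\hat u$ and $\hat\sigma$ with the traces of $u$ and $\bsigma$ and deduce global conformity ($u\in H^1_0(\Omega)$, $\bsigma\in\Hdiv$), and finally elliptic uniqueness. The only difference is one of detail: the paper compresses the injectivity step into a citation of Lemma~4.1 of Demkowicz--Gopalakrishnan and is content with a mesh-dependent boundedness constant, whereas you spell out the face-by-face localization and prove a mesh-independent bound via the scaling built into $\|\cdot\|_\normCHdivh$ --- both of which are fine but not needed beyond what the paper records.
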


\begin{proof}
The boundedness of the expression is immediate with a constant $C$ that may depend on the
mesh,
\[
   b(\phi,\psi)
   \le
   C(\CT)
   \|\phi\|_U \|\psi\|_\CV
   \quad\forall \phi\in U,\ \psi\in\CV.
\]
We are left to show injectivity, i.e., that there holds
\[
   \{\phi\in U;\; b(\phi,\psi)=0\ \forall\psi\in\CV\} = \{0\}.
\]
This can be shown very similarly as \cite[Lemma 4.1]{DemkowiczG_11_ADM}.
We briefly recall the steps.

For given $\phi=(u,\bsigma,\hat u,\hat\sigma)\in U$ with $b(\phi,\psi)=0$
for any $\psi\in\CV$ it follows that for any $K\in\CT$,
\[
   \div\bsigma=0,\quad \grad u=\bsigma\quad\text{on}\quad K
\]
and, by integration by parts,
\[
   \<u-\hat u^0,\btau\cdot\Bn\>_{\partial K}=0,\quad
   \<v,\hat\sigma-\bsigma\cdot\Bn\>_{\partial K}=0
\]
for any $\btau\in\CHdivK$ and $v\in H^1(K)$. Here, $\hat u^0$ is the extension
of $\hat u$ by $0$ onto $\partial\Omega$, where needed. Since
$u|_{\partial K}\in H^{1/2}(\partial K)$ for $K\in\CT$, it follows that
$\hat u^0\in H^{1/2}(\partial K)$.
Combining these results for all elements $K\in\CT$, one concludes that
$u\in H^1_0(\Omega)$ and $\bsigma\in\Hdiv$ with $\grad u=\bsigma$ and $\div\bsigma=0$
on $\Omega$. Furthermore we have that $u|_{\Gamma_0}=\hat u$ and
$\hat\sigma=(\bsigma\cdot\Bn)_\Gamma$.
Since $\Delta u=0$, we deduce that $u=0$, $\bsigma=0$, $\hat u=0$, and $\hat\sigma=0$.
This finishes the proof.
\end{proof}

We now prove a continuous inf-sup condition to control the $u$-component of
functions $\phi=(u,\bsigma,\hat u,\hat\sigma)\in U$.

\begin{lemma} \label{la_infsup_c}
There holds
\[
   \|u\|
   \lesssim
   \sup_{\psi\in \CV\setminus\{0\}}
   \frac{b(\phi,\psi)}{\|\psi\|_\CV}
   \quad\forall \phi=(u,\bsigma,\hat u,\hat\sigma)\in U.
\]
\end{lemma}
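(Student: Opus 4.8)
The plan is to prove the continuous inf-sup condition controlling the $u$-component by a duality/Aubin–Nitsche-type argument, exploiting the full flexibility of the test space $\CV$.
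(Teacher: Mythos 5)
Your one-line plan does point in the paper's direction: the paper's proof is exactly a duality argument, taking $v\in H^1_0(\Omega)$ as the solution of $-\Delta v=u$, setting $\btau:=-\grad v$, and observing that $b(\phi,(v,\btau))=(u,u)_\Omega=\|u\|^2$, since the $\bsigma$-terms cancel ($\btau+\grad v=0$), the jump terms vanish ($v\in H^1_0(\Omega)$ and $\btau\in\Hdiv$), and $\div\btau=u$. But as it stands your proposal is only a declaration of strategy, and it misses the one point that makes this lemma nontrivial. Note also a conceptual slip: $\CV$ is \emph{not} a "more flexible" test space. It is a strict subspace of $V$ equipped with the \emph{stronger} norm $\|(v,\btau)\|_\CV^2=\|v\|_{1,\CT}^2+\|\btau\|_\normHdivh^2+\sum_{K\in\CT}h_K\|\btau\cdot\Bn\|_{\partial K}^2$, so the asserted inf-sup bound over $\CV$ is a \emph{harder} statement than the corresponding one over $V$, not an easier one.

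Concretely, the duality argument only closes if you verify two things the proposal never mentions: (a) that the pair $(v,-\grad v)$ actually lies in $\CV$, i.e.\ that $\grad v\cdot\Bn|_{\partial K}\in L^2(\partial K)$ for every $K\in\CT$, which is not automatic for $v\in H^1_0(\Omega)$ with $\Delta v\in L^2(\Omega)$ (one only gets normal traces in $H^{-1/2}(\partial K)$); and (b) the mesh-uniform weighted bound $\sum_{K\in\CT}h_K\|\grad v\cdot\Bn\|_{\partial K}^2\lesssim\|u\|^2$, i.e.\ estimate \eqref{add_bound}, without which one cannot conclude $\|(v,\btau)\|_\CV\lesssim\|u\|$ and hence cannot bound the quotient from below by $\|u\|$. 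This is where the paper does the real work: it invokes elliptic regularity $v\in H^s(\Omega)$ with $\|v\|_{s,\Omega}\lesssim\|u\|$ for some $s\in(3/2,2)$ (available on the polygonal/polyhedral domain $\Omega$), then controls each term $h_K\|\grad v\cdot\Bn\|_{\partial K}^2$ by $\|\grad v\|_{s-1,K}^2$ via the Piola transform to the reference element, the trace estimate $H^{s-1}(\hat K)\to L^2(\partial\hat K)$, and norm scaling, and finally sums over $K$ using the elementwise superadditivity \eqref{dd} of the Sobolev--Slobodeckij norm. Without (a) and (b), "duality/Aubin--Nitsche" is just a name for the construction; the scaled normal-trace bound is the substance of the proof, and your proposal gives no indication of how to obtain it.
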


\begin{proof}
Let $\phi=(u,\bsigma,\hat u,\hat\sigma)\in U$ be given.
We define $v\in H^1_0(\Omega)$ as the
solution of $-\Delta v=u$ on $\Omega$, and set $\btau:=-\grad v$.
It follows that $\|(v,\btau)\|_V\lesssim \|u\|$. If we show the additional bound
\begin{equation} \label{add_bound}
   \sum_{K\in\CT} h_K\|\btau\,\cdot\Bn\|_{\partial K}^2 \lesssim \|u\|^2,
\end{equation}
it follows that $\|(v,\btau)\|_\CV\lesssim \|u\|$ and
\[
   \frac{b(\phi,(v,\btau))}{\|(v,\btau)\|_\CV}
   =
   \frac{ (u, u)_\Omega}{\|(v,\btau)\|_\CV}
   \gtrsim
   \|u\|.
\]
We are left to prove \eqref{add_bound}. It is well known that there exists $s\in (3/2,2)$
such that the solution $v$ of $-\Delta v=u$ satisfies $v\in H^s(\Omega)$ with
bound $\|v\|_{s,\Omega}\lesssim \|u\|$. In particular, we can bound the normal derivative
of $v$ on $\partial K$ (which equals $\pm\btau\cdot\Bn$) for any $K\in\CT$ by $\|u\|$.
We only need to check that the appearing constants do not depend on the mesh.
This can be seen by the scaling properties of the involved norms, as follows.
For a given $K\in\CT$ and reference element $\hat K$ denote transformed functions
generically with an additional $\hat\ $. Then, applying the Piola transformation
and the continuity of the normal trace $H^{s-1}(\hat K)\to L^2(\partial\hat K)$, we obtain
\begin{align*}
   h_K\|\btau\cdot\Bn\|_{\partial K}^2
   \simeq
   h_K^{2-n}\|\hat\btau\cdot\hat\Bn\|_{\partial\hat K}^2
   \lesssim
   h_K^{2-n}\|\hat\btau\|_{s-1,\hat K}^2
   \lesssim
   \|\btau\|_{s-1,K}^2.
\end{align*}
Summing over $K\in\CT$ and making use of \eqref{dd} proves that
\[
   \sum_{K\in\CT} h_K\|\btau\cdot\Bn\|_{\partial K}^2
   \lesssim
   \|\btau\|_{s-1,\Omega}^2
   =
   \|\grad v\|_{s-1,\Omega}^2
   \lesssim
   \|u\|^2.
\]
This shows \eqref{add_bound} and finishes the proof of the lemma.
\end{proof}

As an implication of Lemma~\ref{la_infsup_c} we obtain the discrete control of the
$u$-component of any discrete function $\phi\in U_h$.

\begin{lemma} \label{la_infsup}
For $r\ge p+n$ there holds
\[
   \sup_{\psi\in V^r\setminus\{0\}}
   \frac{b(\phi,\psi)}{\|\psi\|_V} \gtrsim \|u\|
   \quad\forall \phi=(u,\bsigma,\hat u,\hat\sigma)\in U_h.
\]
\end{lemma}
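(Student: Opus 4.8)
The plan is to deduce the discrete inf-sup condition (Lemma~\ref{la_infsup}) from the continuous one (Lemma~\ref{la_infsup_c}) by showing that the supremizer constructed in the continuous setting can be replaced by a suitable element of the discrete test space $V^r$ without losing control of the $u$-component. Recall that in Lemma~\ref{la_infsup_c}, for a given $\phi=(u,\bsigma,\hat u,\hat\sigma)$, the supremizer is $\psi=(v,\btau)$ with $v\in H^1_0(\Omega)$ solving $-\Delta v=u$ and $\btau=-\grad v$, for which $b(\phi,\psi)=(u,u)_\Omega=\|u\|^2$ and $\|\psi\|_\CV\lesssim\|u\|$. The key point is that, for discrete $\phi\in U_h$, the $u$-component is a piecewise polynomial $u\in P_p(\CT)$, and the effect of replacing $\psi$ by a discrete approximant can be controlled by a Fortin-type operator mapping into $V^r$.

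First I would introduce a Fortin operator $\Pi:\CV\to V^r$ that is uniformly bounded, $\|\Pi\psi\|_V\lesssim\|\psi\|_\CV$, and that satisfies the commuting/moment-preserving properties $b(\phi,\psi-\Pi\psi)=0$ for all $\phi\in U_h$. Concretely, the second component of $\Pi$ should be built from the Raviart--Thomas interpolation $\BP_{p+2}(\CT)$ (this is exactly the place where $r\ge p+n$ and the degree choice enter, since the divergence of $\BP_{p+2}$ must reach $P_{p+1}$ and the normal traces must match $P_p$ on faces), and the first component from a polynomial projection onto $P_r(\CT)$ preserving the relevant moments against $\div\btau$ and $\grad v$ tested by $U_h$. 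Given such a $\Pi$, one has for $\phi\in U_h$
\[
   \sup_{\psi\in V^r\setminus\{0\}}\frac{b(\phi,\psi)}{\|\psi\|_V}
   \ge
   \frac{b(\phi,\Pi\psi_0)}{\|\Pi\psi_0\|_V}
   =
   \frac{b(\phi,\psi_0)}{\|\Pi\psi_0\|_V}
   \gtrsim
   \frac{b(\phi,\psi_0)}{\|\psi_0\|_\CV},
\]
where $\psi_0$ denotes the continuous supremizer from Lemma~\ref{la_infsup_c}; taking the supremum over $\psi_0$ and invoking Lemma~\ref{la_infsup_c} then yields $\gtrsim\|u\|$.

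The hard part will be constructing the Fortin operator with the stated commuting property and, above all, verifying its uniform boundedness on $\CV$ with respect to the mesh. The moment-preserving conditions $b(\phi,\psi-\Pi\psi)=0$ for $\phi\in U_h$ unfold into: preservation of $(u_h,\div(\btau-\Pi_2\btau))_\CT$, of $(\bsigma_h,(\btau-\Pi_2\btau)+\grad(v-\Pi_1 v))_\CT$, of the trace pairings $\langle[(\btau-\Pi_2\btau)\cdot\Bn],\hat u_h\rangle_{\Gamma_0}$, and of $\langle\hat\sigma_h,[v-\Pi_1 v]\rangle_\Gamma$. Matching these against the polynomial degrees $P_{p+1}(G_0)$ and $P_p(G)$ of the trace unknowns is precisely why the normal traces of the Raviart--Thomas component must be accurately represented, which is the only place (as the introduction notes) where bounded $p$ is genuinely used — the stability constant of the Raviart--Thomas interpolation operator is mesh-independent but may grow in $p$. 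The boundedness estimate itself proceeds by scaling to the reference element, exactly as in the proof of Lemma~\ref{la_infsup_c}, exploiting that the scaled norm $\|\cdot\|_\normCHdivh$ on $\CHdivh$ controls the $L^2(\partial K)$ normal traces needed to define the Raviart--Thomas degrees of freedom; this is exactly the reason the enlarged space $\CV$ and its scaled norm were introduced, and why the continuous estimate was stated over $\CV$ rather than $V$.
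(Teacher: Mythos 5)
Your proposal is correct and follows essentially the same route as the paper: the paper's proof uses precisely the Fortin pair you describe, namely the Gopalakrishnan--Qiu projection $\Pi^{\mathrm{grad}}_r:H^1(\CT)\to P_r(\CT)$ (whose existence is where the hypothesis $r\ge p+n$ actually enters) combined with the piecewise Raviart--Thomas interpolation $\Pi^{\mathrm{RT}}_{p+2}:\CHdivh\to\BP_{p+2}(\CT)$ (bounded for bounded $p$), and then runs your chain of inequalities starting from the continuous inf-sup of Lemma~\ref{la_infsup_c} on $\CV$, using the invariance $b(\phi,\psi)=b(\phi,\Pi\psi)$ for $\phi\in U_h$ and the boundedness of both operators in the scaled $\CV$-norm, finishing with $\|\cdot\|_V\le\|\cdot\|_\CV$. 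The only slips are cosmetic and do not affect the argument: the condition $r\ge p+n$ concerns the scalar projection rather than the Raviart--Thomas component, and the moment matching is the other way around from what you wrote (normal traces must be preserved against $P_{p+1}(G_0)$ for $\hat u_h$, the divergence against $P_p(\CT)$ for $u_h$).
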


\begin{proof}
We follow the standard idea of using projection operators to deduce a discrete
inf-sup property from the corresponding continuous property.
More precisely, we employ the operator
\[
   \Pi^\mathrm{grad}_r:\; H^1(\CT)\to P_r(\CT)
\]
from Gopalakrishnan and Qiu \cite{GopalakrishnanQ_APD} which is continuous
in $H^1(\CT)$, and combine it (differently from \cite{GopalakrishnanQ_APD}) with
the standard piecewise Raviart-Thomas interpolation operator
\[
   \Pi^\mathrm{RT}_{p+2}:\; \CHdivh\to \BP_{p+2}(\CT)
\]
which, for bounded polynomial degree $p$, is bounded in $\CHdivh$.
The boundedness of the normal components and of the divergence is immediate
by the definition of the operator, and a bound for the $L^2$-norm can be found
in the proof of \cite[Theorem 5.36]{DiPietroE_12_MAD}.

Now, let $\phi=(u,\bsigma,\hat u,\hat\sigma)\in U_h$ be given.
Then, from the continuous inf-sup property in Lemma~\ref{la_infsup_c},
the relaxed conformity $U_h\subset U$, and the continuity of the operators just
introduced, we obtain
\begin{align*}
   \|u\|
   &\lesssim
   \sup_{(v,\btau)\in \CV\setminus\{0\}}
   \frac{b(\phi,(v,\btau))}{\|(v,\btau)\|_\CV}
   =
   \sup_{(v,\btau)\in \CV\setminus\{0\}}
   \frac{b(\phi,(\Pi^\mathrm{grad}_r v, \Pi^\mathrm{RT}_{p+2} \btau))}{\|(v,\btau)\|_\CV}
   \\
   &\lesssim
   \sup_{(v,\btau)\in \CV,\;
         (\Pi^\mathrm{grad}_r v, \Pi^\mathrm{RT}_{p+2} \btau)\not=0}
   \frac{b(\phi,(\Pi^\mathrm{grad}_r v, \Pi^\mathrm{RT}_{p+2} \btau))}
        {\|(\Pi^\mathrm{grad}_r v, \Pi^\mathrm{RT}_{p+2} \btau)\|_\CV}
   \lesssim
   \sup_{(v,\btau)\in V^r\setminus\{0\}}
   \frac{b(\phi,(v,\btau))}{\|(v,\btau)\|_\CV}.
\end{align*}
Using the trivial bound $\|\cdot\|_V\le \|\cdot\|_\CV$, the assertion follows.
\end{proof}

\begin{remark} \label{rem}
In the proof of Lemma~\ref{la_infsup} we have in particular shown that there holds
\[
   \|\phi\|_\CE
   :=
   \sup_{(v,\btau)\in \CV\setminus\{0\}}
   \frac{b(\phi,(v,\btau))}{\|(v,\btau)\|_\CV}
   \lesssim
   \sup_{(v,\btau)\in V^r\setminus\{0\}}
   \frac{b(\phi,(v,\btau))}{\|(v,\btau)\|_V}
   \quad\forall\phi\in U_h,
\]
cf.~Lemma~\ref{la1}.
\end{remark}

\subsection{Boundedness of the bilinear form} \label{sec_bound}

To analyze the jumps of $v\in H^1(\CT)$ across $\Gamma$
we define the norm of the dual space of $H^{-1/2}(\Gamma)$:
\begin{align*}
   \|[v]\|_\Gamma
   &:=
   \sup_{\eta\in H^{-1/2}(\Gamma)\setminus\{0\}} \frac {\<\eta,[v]\>}{\|\eta\|_{-1/2,\Gamma}}
   =
   \sup_{\bfeta\in\Hdiv\setminus\{0\}} \frac {\<\bfeta\cdot\Bn,[v]\>}{\|\bfeta\|_{\Hdiv}}.
\end{align*}
In contrast, the normal jumps of $\btau\in\Hdivh$ across $\Gamma$ are
controlled by a norm that is dual to our previously defined scalable intrinsic norm
of traces of $H^1(\Omega)$-functions, cf.~\eqref{3}, and, therefore, is scalable.
In other words, we define for any $g\in G$ the space
\[
   \tH^{-1/2}(g) := (H^{1/2}(g))'
\]
with corresponding norm by duality, and the broken space with respective norm
\[
   \tH^{-1/2}(G_0) := (H^{1/2}(G_0))',\quad
   \|v\|_{-1/2,G_0}^2 := \sum_{g\in G_0} \|v\|_{-1/2,g}^2.
\]

\begin{lemma} \label{la_bound}
For bounded polynomial degree $p$ there holds
\[
   b(\phi,\psi) \lesssim \|\phi\|_U \|\psi\|_V
   \quad \forall \phi\in U,\ \psi\in V^r.
\]
\end{lemma}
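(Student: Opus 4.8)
The plan is to bound the four contributions to the bilinear form \eqref{bfg} separately, matching each with the corresponding component of $\|\phi\|_U$ and a part of $\|\psi\|_V$. Write $\phi=(u,\bsigma,\hat u,\hat\sigma)\in U$ and $\psi=(v,\btau)\in V^r$. The two volume terms are routine: by the Cauchy--Schwarz inequality,
\[
   (\bsigma,\btau+\grad v)_\CT + (u,\div\btau)_\CT
   \le
   \|\bsigma\|\,(\|\btau\|+\|\grad v\|) + \|u\|\,\|\div\btau\|,
\]
and since $\|\btau\|,\|\div\btau\|\le\|\btau\|_{\normHdivh}$ and $\|\grad v\|\le\|v\|_{1,\CT}$ (here $\grad v$ and $\div\btau$ are the piecewise operators), one further Cauchy--Schwarz bounds this by $(\|u\|+\|\bsigma\|)\,\|\psi\|_V$. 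No polynomial structure is needed.

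For the test-jump term I would use the dual norm $\|[v]\|_\Gamma$ introduced just before the lemma, so that $\<\hat\sigma,[v]\>_\Gamma\le\|\hat\sigma\|_{-1/2,\Gamma}\,\|[v]\|_\Gamma$, and then prove $\|[v]\|_\Gamma\le\|v\|_{1,\CT}$. The latter follows from elementwise integration by parts: for every $\bfeta\in\Hdiv$,
\[
   \<\bfeta\cdot\Bn,[v]\>_\Gamma = (\bfeta,\grad v)_\CT + (v,\div\bfeta)_\Omega
   \le \|v\|_{1,\CT}\,\|\bfeta\|_{\Hdiv},
\]
so that dividing by $\|\bfeta\|_{\Hdiv}$ and taking the supremum gives the claim. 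This step uses only $v\in H^1(\CT)$, so this term is in fact bounded on all of $V$.

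The trial-jump term $\<[\btau\cdot\Bn],\hat u\>_{\Gamma_0}$ is the crux, and the only place where the polynomial structure of $\btau$ and the scalable trace norm \eqref{3} enter. I would localise to the faces $g\in G_0$ and apply the $g$-local duality $\<[\btau\cdot\Bn]|_g,\hat u\>_g\le\|[\btau\cdot\Bn]|_g\|_{-1/2,g}\,\|\hat u\|_{1/2,g}$. Since $\btau\in\BP_{p+2}(\CT)$ has a polynomial, hence $L^2(g)$, normal trace, the weight $h_{K(g)}^{-1}$ in \eqref{3} gives $\|w\|_{L^2(g)}\le h_{K(g)}^{1/2}\|w\|_{1/2,g}$, and therefore by Cauchy--Schwarz on $g$,
\[
   \|[\btau\cdot\Bn]|_g\|_{-1/2,g}\le h_{K(g)}^{1/2}\,\|[\btau\cdot\Bn]|_g\|_{L^2(g)}.
\]
Next I would invoke the polynomial (inverse) trace inequality $\|\btau\cdot\Bn\|_{L^2(g)}\lesssim h_K^{-1/2}\|\btau\|_K$ on each element $K$ adjacent to $g$, whose constant depends on $p$ but is otherwise mesh-independent by scaling to the reference element; together with the bounded ratio of neighbouring diameters, $h_{K(g)}\simeq h_K$, the powers of $h$ cancel and $\|[\btau\cdot\Bn]|_g\|_{-1/2,g}\lesssim\|\btau\|_K+\|\btau\|_{K'}$, with $K,K'$ the (boundedly many) elements sharing $g$. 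A final Cauchy--Schwarz over $g\in G_0$, using that each element carries only boundedly many faces (the finite number of hanging-face patterns) so that $\sum_{g\in G_0}(\|\btau\|_K^2+\|\btau\|_{K'}^2)\lesssim\|\btau\|_{\normHdivh}^2$, then yields $\<[\btau\cdot\Bn],\hat u\>_{\Gamma_0}\lesssim\|\btau\|_{\normHdivh}\,\|\hat u\|_{1/2,G_0}$.

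The main obstacle is precisely this last term. The other three reduce to Cauchy--Schwarz and one integration by parts with universal constants, but here one must combine the \emph{scalable} dual norm, the polynomial inverse trace inequality, and the hanging-node geometry so that all powers of $h$ cancel and the constant stays mesh-independent. This is also exactly where boundedness of $p$ is used: for unbounded degree the inverse trace constant degrades and produces the logarithmic factor in $p$ mentioned in the introduction, whereas the remaining terms hold with constants independent of both the mesh and $p$.
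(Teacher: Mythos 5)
Your proof is correct, but the treatment of the crux term $\<[\btau\cdot\Bn],\hat u\>_{\Gamma_0}$ takes a genuinely different route from the paper. The paper keeps the fractional dual norm throughout: it bounds $\|\btau\cdot\Bn\|_{-1/2,g}$ by mapping to a reference element via the Piola transform, invoking the equivalence of norms on the finite-dimensional space $\BP_{p+2}(\hat K)$ to get $\|\hat\btau\cdot\hat\Bn\|_{-1/2,\hat g}\lesssim\|\hat\btau\|_{\div,\hat K}$, and then scaling back; the hanging-face geometry forces a four-case analysis (entire face of $K(g)$, entire face of a neighbor, strict sub-face of a neighbor's face, face covered by several neighbors' faces), each case relying on the finitely-many-patterns assumption because fractional norms do not restrict nicely to sub-faces. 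You instead insert the intermediate bound $\|w\|_{-1/2,g}\le h_{K(g)}^{1/2}\|w\|_{L^2(g)}$ (a correct consequence of the $h^{-1}$ weight in \eqref{3}) and then use the discrete trace inequality $\|\btau\cdot\Bn\|_{L^2(g)}\lesssim h_K^{-1/2}\|\btau\|_K$ for polynomials; the $h$-powers cancel by the bounded-ratio assumption. This is simpler and collapses the paper's four cases into one, precisely because $L^2$ norms restrict monotonically to subsets, so hanging faces cause no difficulty. What the paper's heavier argument buys is sharpness in $p$: staying with the genuine dual norm on the reference element is what permits the extension to arbitrary degree with only a logarithmic factor, as announced in the introduction, whereas your $L^2$ detour pays the inverse-trace constant, which grows polynomially (not logarithmically) in $p$ --- so your closing remark attributing the log factor to the inverse trace inequality is inaccurate, though immaterial for the bounded-$p$ statement being proved. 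A further minor difference: your bound for the crux term uses only $\|\btau\|$ and never $\|\div\btau\|$, while the paper's norm-equivalence argument naturally produces $\|\btau\|_K+h_K\|\div\btau\|_K$; both are dominated by $\|\btau\|_\normHdivh$. Your handling of the remaining three terms (Cauchy--Schwarz for the volume terms, and the elementwise integration-by-parts identity $\<\bfeta\cdot\Bn,[v]\>_\Gamma=(\bfeta,\grad v)_\CT+(v,\div\bfeta)_\Omega$ to get $\|[v]\|_\Gamma\le\|v\|_{1,\CT}$) is correct and merely spells out what the paper compresses into the single line $b(\phi_0,\psi)\lesssim\|\phi_0\|_U\|\psi\|_V$ via the decomposition $\phi=\phi_0+\phi_{\hat u}$.
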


\begin{proof}
Let $\phi=(u,\sigma,\hat u,\hat\sigma)\in U$ and $\psi=(v,\btau)\in V^r$ be given.
We decompose $\phi=\phi_0+\phi_{\hat u}$ with $\phi_{\hat u}=(0,0,\hat u,0)$,
and bound by duality
\[
   b(\phi,\psi) = b(\phi_0,\psi) + \<[\btau\cdot\Bn],\hat u\>_{\Gamma_0}
   \lesssim
   \|\phi_0\|_U \|\psi\|_V +
   \|\hat u\|_{1/2,G_0} \|[\btau\cdot\Bn]\|_{-1/2,G_0}.
\] 
It remains to bound $\|[\btau\cdot\Bn]\|_{-1/2,G_0}$.
Consider a face $g\in G_0$. It is an entire face of $K(g)\in\CT$.
\begin{enumerate}
\item
We start by bounding $\|(\btau|_{K(g)}\cdot\Bn)\|_{-1/2,g}$.
For $\btau\in\BP_{p+2}(K(g))$ we consider the Piola transformed function
$\hat\btau$ on a reference element $\hat K$ with corresponding face $\hat g$.
Since
\[
   \|\hat\btau\cdot\hat\Bn\|_{-1/2,\hat g}
   \lesssim
   \|\hat\btau\|_{\div,\hat K}
\]
due to the equivalence of norms in finite-dimensional spaces, scaling proves
\begin{align*}
   \|\btau\cdot\Bn\|_{-1/2,g}
   \lesssim
   \bigl(\|\btau\|_K + h_K \|\div\btau\|_K\bigr)
   \lesssim
   \|\btau\|_{\normHdivK}.
\end{align*}
\item
If $g$ is an entire face also of an element $K'\in\CT$ adjacent to $K(g)$ then
we prove analogously as before that
\begin{equation} \label{2}
   \|\btau|_{K'}\cdot\Bn\|_{-1/2,g}
   \lesssim
   \|\btau\|_{\normHdivKp}.
\end{equation}
\item
If $g$ is a subset of a face of an element $K'\in\CT$ adjacent to $K(g)$ then
we proceed as follows. Applying the Piola transform to $K'$ we obtain transformed
functions as before, and $\hat g$ is a subset of a face $\hat g'$ of $\hat K'$.
There holds
\begin{align*}
   \|\hat\btau\cdot\hat\Bn\|_{-1/2,\hat g}
   &\lesssim
   \|\hat\btau\cdot\hat\Bn\|_{-1/2,\hat g'}
   \lesssim
   \|\hat\btau\|_{\div,\hat K'}.
\end{align*}
Here, we use that there are only finitely many patterns of sub-faces
$\hat g\subset\hat g'$ for all meshes $\CT$ and all faces $g$ considered in this case.
Making use of the assumption $\max\{h_k/h_{K'}, h_{K'}/h_K\}=O(1)$ and scaling then
proves the bound \eqref{2}.
\item
If $g$ is covered by faces $\{g'\}=G'$ of adjacent elements
$\{K'\}=\CT'\subset\CT\setminus\{K(g)\}$
then we apply the Piola transform to all the elements of $\CT'$, and use the notation
from before. Since there are only finitely many patterns of subsets
$\hat g|_{\hat g'}\subset \hat g'$ for each $K'\in\CT'$ mapped onto $\hat K'$,
we estimate
\begin{align*}
   \|\hat\btau\cdot\hat\Bn\|_{-1/2,\hat g}^2
   \lesssim
   \sum_{g'\in G'}
   \|\hat\btau\cdot\hat\Bn\|_{-1/2,\hat g'}^2
   &\lesssim
   \sum_{K'\in\CT'}
   \|\hat\btau\|_{\div,\hat K'}^2
\end{align*}
and obtain by scaling
\[
   \|\btau|_{\cup\{K'\in\CT'\}}\cdot\Bn\|_{-1/2,g}^2
   \lesssim
   \sum_{K'\in\CT'}
   \|\btau\|_{\normHdivKp}^2.
\]
\end{enumerate}
We finish the proof with an application of the triangle inequality and by
(squaring and) summing the estimates of the four cases with respect to $K\in\CT$.
\end{proof}

\subsection{Proof of Theorem~\ref{thm}} \label{sec_pf}

The a priori error estimate follows by standard Strang-type arguments used for
non-conforming methods. More precisely, adding and subtracting any discrete function,
making use of the consistency of \eqref{DPG}, for bounding $\|u-u_h\|$ it is enough
to have the discrete inf-sup condition
\[
   \sup_{\psi\in T^r(U_h)\setminus\{0\}}
   \frac{b(\phi,\psi)}{\|\psi\|_V} \gtrsim \|u\|
   \quad\forall \phi=(u,\bsigma,\hat u,\hat\sigma)\in U_h
\]
and the boundedness of $b$ on $U\times V^r$.
The boundedness is shown by Lemma~\ref{la_bound}, and Lemma~\ref{la_infsup}
implies the needed discrete inf-sup property:
\[
   \sup_{\psi\in V^r\setminus\{0\}} \frac{b(\phi,\psi)}{\|\psi\|_V}
   =
   \sup_{\psi\in V^r\setminus\{0\}} \frac{\<T^r\phi,\psi\>_V}{\|\psi\|_V}
   =
   \frac{\<T^r\phi,T^r\phi\>_V}{\|T^r\phi\|_V}
   =
   \sup_{\psi\in T^r(U_h)\setminus\{0\}} \frac{b(\phi,\psi)}{\|\psi\|_V}
\]
for any $\phi\in U_h$, cf.~\cite{GopalakrishnanQ_APD}.

This very argument implies that a discrete inf-sup property of $b$ on
$U_h\times T^r(U_h)$, which holds by Remark~\ref{rem}, leads to a discrete
inf-sup property of $b$ on $U_h\times V^r$.
Therefore, the square system \eqref{DPG} is uniquely solvable.
This finishes the proof of Theorem~\ref{thm}.

\bibliographystyle{siam}
\bibliography{/home/norbert/tex/bib/bib,/home/norbert/tex/bib/heuer}

\begin{thebibliography}{1}

\bibitem{DemkowiczG_10_ADM}
{\sc L.~Demkowicz and J.~Gopalakrishnan}, {\em Analysis of the {DPG} method for
  the {Poisson} problem}, {Report} 10-37, ICES, The University of Texas at
  Austin, 2010.

\bibitem{DemkowiczG_11_ADM}
\leavevmode\vrule height 2pt depth -1.6pt width 23pt, {\em Analysis of the
  {DPG} method for the {Poisson} problem}, SIAM J. Numer. Anal., 49 (2011),
  pp.~1788--1809.

\bibitem{DemkowiczG_11_CDP}
\leavevmode\vrule height 2pt depth -1.6pt width 23pt, {\em A class of
  discontinuous {Petrov-Galerkin} methods. {Part II}: {O}ptimal test
  functions}, Numer. Methods Partial Differential Eq., 27 (2011), pp.~70--105.

\bibitem{DemkowiczGN_12_CDP}
{\sc L.~Demkowicz, J.~Gopalakrishnan, and A.~H. Niemi}, {\em A class of
  discontinuous {P}etrov-{G}alerkin methods. {P}art {III}: {A}daptivity}, Appl.
  Numer. Math., 62 (2012), pp.~396--427.

\bibitem{DemkowiczH_13_RDM}
{\sc L.~Demkowicz and N.~Heuer}, {\em Robust {DPG} method for
  convection-dominated diffusion problems}, SIAM J. Numer. Anal., 51 (2013),
  pp.~2514--2537.

\bibitem{DiPietroE_12_MAD}
{\sc D.~A. Di~Pietro and A.~Ern}, {\em Mathematical aspects of discontinuous
  {G}alerkin methods}, vol.~69 of Math\'ematiques \& Applications (Berlin)
  [Mathematics \& Applications], Springer, Heidelberg, 2012.

\bibitem{GopalakrishnanQ_APD}
{\sc J.~Gopalakrishnan and W.~Qiu}, {\em An analysis of the practical {DPG}
  method}, Math. Comp.
\newblock appeared online, DOI 10.1090/S0025-5718-2013-02721-4.

\end{thebibliography}

\end{document}